\numberwithin{equation}{section}
\newtheorem{theorem}{Theorem}[section]
\newtheorem{defn}[theorem]{Definition}
\newtheorem{corollary}[theorem]{Corollary}
\newtheorem{lemma}[theorem]{Lemma}
\newtheorem{prop}[theorem]{Proposition}
\newtheorem{remark}[theorem]{Remark}
\newtheorem{ques}[theorem]{Question}
\theoremstyle{definition}
\newtheorem{obser}[theorem]{Observation}
\newtheorem{example}[theorem]{Example}
\def \begineq{\begin{equation}}
\def \endeq{\end{equation}}
\def \bb{\mathbb}
\def \CC{{\bb{C}}}
\def \CP{{\bb{CP}}}
\def \QQ{{\bb{Q}}}
\def \RR{{\bb{R}}}
\def \TT{{\bb{T}}}
\def \ZZ{{\bb{Z}}}
\def \({\left(}
\def \){\right)}
\def \<{\langle}
\def \>{\rangle}
\def \bar{\overline}
\def \deg{\mathrm{deg}}
\def \tensor{\otimes}
\def \Aut{{\rm Aut}}
\def \qed{\hfill $\square$ \vspace{0.03in}}
\begin{document}

\title{Complex Cobordism of quasitoric orbifolds}

\author[S. Sarkar]{Soumen Sarkar}
\address{\it{Department of Mathematics and Statistics, University of Regina,
Regina S4S 6A7, Canada}}

\email{soumensarkar20@gmail.com}

\subjclass[2010]{55N22, 57R90}

\keywords{quasitoric orbifolds, stable and almost complex structure, complex cobordism}

\abstract We construct orbifolds with quasitoric boundary and show that they have stable almost complex structure.
%Some explicit complex orbifold cobordism relations among quasitoric orbifolds are computed. In particular,
%We compute the complex orbifold cobordism of quasitoric orbifolds in terms of complex orbifold projective spaces.
We show that a quasitoric orbifold is complex cobordant to finite disjoint copies of complex orbifold
projective spaces.
Finally some computations in the complex cobordism ring for manifolds are given. 
%The famous problem of Hirzebruch is that which complex cobordism classes in $\Omega^U$ contain connected
%nonsingular algebraic varieties?
%We introduce some combinatorial conditions to show when a
%complex cobordism class may contain an almost complex quasitoric manifold.
%Andrew Wilfong give some necessary condition of this problem up to dimension 8.
\endabstract
\maketitle

\section{Introduction}\label{intro}
Cobordism was explicitly introduced by Lev Pontryagin in his seminal paper \cite{Pon1}.
%in geometric work on manifolds.
%In \cite{Tho}, Ren\'{e} Thom showed that cobordism groups could be computed by results of homotopy theory.
 In \cite{Tho} Thom showed that the cobordism groups could be computed
by results of homotopy theory using the Thom complex construction.
%of $G$-manifolds, for a Lie group $G \subseteq O_m$, are in one to one correspondence with the
%elements of the homotopy groups of the Thom space of $G$.
Later, Atiyah \cite{At1} showed that complex cobordism is a generalized
cohomology theory. In Section 1 of \cite{Qui}, Quillen discussed 
geometric interpretation of complex cobordism rings. Following his definition, we define
%the complex cobordism of stable almost complex orbifolds. So we can define
the complex orbifold (co)bordism groups and rings for the category of stable almost complex orbifolds.
It seems that complex cobordism for orbifolds did not appear in the literature until now. 
%the complex cobordism for the category of stable complex orbifolds.
However oriented cobordism of orbifolds is studied in \cite{Dru1} and \cite{Dru2}.

In the pioneering paper \cite{DJ}, Davis and Januskiewicz introduced the topological counterpart of nonsingular projective
toric varieties. They called this class of manifolds toric manifolds. Since ``toric manifold'' is used in algebraic 
geometry for ``nonsingular toric variety'', Buchstaber and Panov \cite{BP} introduced the term ``quasitoric
manifold'' instead. Quasitoric orbifolds are generalization of quasitoric manifolds and they are
studied in \cite{PS}.
%introduced by Davis and Januskiewicz in \cite{DJ}.
An orbifold with quasitoric boundary
is an orbifold with boundary where the boundary is a disjoint union of some quasitoric
orbifolds.

In this article we study the complex cobordism of quasitoric orbifolds. The article
is organized as follows.
In Section \ref{orb}, we recall the definition of stable complex structure on an orbifold.
In Section \ref{qo}, we recall the definition of quasitoric orbifold, omniorientation
on a quasitoric orbifold and equivariant classification of quasitoric orbifolds. Also
we show that a quasitoric orbifold over a simplex is equivariantly homeomorphic to a
complex orbifold projective space, see Lemma \ref{cops}. In Section \ref{def}, we construct
oriented orbifolds with quasitoric boundary. In Section \ref{ccob}, first we show that the
orbifolds with quasitoric boundary which are constructed in Section \ref{def} have stable
complex structure, see Theorem \ref{comcob}. Then we show that
a quasitoric orbifold is complex cobordant to finite disjoint copies of complex orbifold
projective spaces, see Theorem \ref{qbd}. This process produces explicit complex
cobordism relations among quasitoric orbifolds. We show that the set of all complex
orbifold cobordism classes of complex orbifold projective spaces is not linearly 
independent, see Observation \ref{obs}. Note that this is in contrast to manifold case.
As a particular case when the orbifold
singularity is trivial, we get explicit complex cobordism relations among quasitoric manifolds.
%Quasitoric orbifolds are orbifold generalization of quasitoric manifolds which were
%introduced by Davis and Januskiewicz in \cite{DJ}. An orbifold with quasitoric boundary
%is an orbifold with boundary where the boundary is a disjoint union of some quasitoric orbifolds.
% The strategy of our proof is to first construct some compact orientable manifolds with quasitoric boundary.
%Then identifying suitable boundary components using certain equivariant homeomorphisms
%we obtain oriented manifolds with the boundary $\CP^{2k+1}$ for each $k \geq 0$, see theorem \ref{gr}.
At the end of Section \ref{ccob}, we give some sufficient conditions to the famous problem of Hirzebruch
which asks when a complex cobordism class in the complex cobordism ring $\Omega^U$ for manifolds
may contain a connected nonsingular algebraic variety.
%This provides the partial answer of the famous problem of Hirzebruch see Problem 5.27 in \cite{BP}.
%is that which complex cobordism classes in $\Omega^U$ contain connected nonsingular algebraic varieties?
In \cite{W}, Andrew Wilfong gives some necessary condition of this problem up to dimension 8.
We also compute the Chern numbers of a quasitoric manifold over a simplex, see Example \ref{cncpn}.

%Interestingly, we show that such a orbifold with quasitoric boundary could be viewed as the quotient space
%of a torus orbifold corresponding to a certain circle action on it followed by a natural question, see Section \ref{ccob}.

\section{orbifolds}\label{orb}
Orbifolds were introduced by Satake \cite{Sat}, who called them $V$-manifolds.
An orbifold is a singular space that is locally look like a quotient of an
open subset of Euclidean space by an action of a finite group. The readers are referred
to the Section 1.1 in \cite{ALR} for the definition and basic facts concerning effective orbifolds.
Also they may see \cite{MM} for an excellent exposition of the foundation of the theory of the
reduced differentiable orbifolds.

Similarly as the definition of manifold with boundary, we can talk about orbifold with
boundary, see Definition 1.3 in \cite{Dru1}. In this article Druschel studies the
orientation on orbifolds in Section 1.

Many concepts in orbifold theory are defined in the context of groupoid, see \cite{ALR} to enjoy
this approach. For example, Section 2.3 in \cite{ALR} talks about orbifold vector bundle in the language
of groupoid. Most relevant example of the orbibundle of an orbifold is its tangent bundle. An explicit
description of the tangent bundle of an effective orbifold is given in Section 1.3 of \cite{ALR}.

\begin{defn}
Let $Y$ be a smooth orbifold with the tangent bundle $(TY, p_Y, Y)$ where $p_Y \colon TY \to Y$ is the projection map.
\begin{enumerate}
\item An almost complex structure on $Y$ is an endomorphism $J \colon TY \to TY$ such that $J^2 = -Id$.

\item A stable almost complex (or stable complex) structure on $Y$ is an endomorphism
\begin{equation}
J \colon TY \oplus (Y \times \RR^k) \to TY \oplus (Y \times \RR^k)
\end{equation}
such that $J^2 = -Id$ for some positive integer $k$.
%Let $\tau$ be a complex orbibundle isomorphic (as a real bundle) to
%$TY \oplus (Y \times \RR^k)$. We denote the pair $(Y, \tau)$ as a stable almost complex structure on $Y$.

%\item We call $\mathcal{Y}$ a complex orbifold if all the defining maps are holomorphic. In this case $\mathcal{Y}$
%has a canonical stable complex structure, namely the pair $(\mathcal{Y}, T\mathcal{Y})$.
\end{enumerate}
\end{defn}

\section{Quasitoric orbifolds}\label{qo}
In this Section we review the definition of quasitoric orbifold following \cite{PS}. 
We also discuss several results on quasitoric orbifolds.
An $n$-dimensional {\em simple polytope} in $\RR^n$ is a convex polytope where exactly $n$ bounding hyperplanes meet at 
each vertex.
A $facet$ is a codimension one face of a convex polytope. 
%A codimension one face of a convex polytope are called a $facet$.
If $P$ is a convex polytope then we denote the set of all facets of $P$ by $\mathcal{F}(P)$.
Let $\TT^n = (\ZZ^n \tensor_{\ZZ} \RR)/\ZZ^n$ and $\TT_M = (M \tensor_{\ZZ} \RR)/M$ for a free $\ZZ$-module $M$. 

\begin{defn}
A $2n$-dimensional quasitoric orbifold $ X$ is a smooth orbifold with a $\TT^n$-action,
%where $N$ is a fixed free $\ZZ$-module of rank $n$,
such that the orbit space is (diffeomorphic as manifold with corners to) an $n$-dimensional simple polytope $P$.
Denote the projection map from $X$ to $P$ by $\pi \colon X \to P$. Furthermore every point $x \in X$ has
\begin{itemize}
\item[A1)] a $\TT^n$-invariant neighborhood $V$,
\item[A2)] an associated free  $\ZZ$-module $M$ of rank $n$ with an
  isomorphism $\theta \colon \TT_M \to U(1)^n$ and an injective module homomorphism $\iota \colon M \to \ZZ^n$
  which induces a surjective covering homomorphism $\iota_M \colon \TT_M \to \TT^n $,
\item[A3)] an orbifold chart $(W, G, \eta)$ over $V$ where $W$ is $\theta$-equivariantly diffeomorphic to an
    open set in $\CC^n$,  $G = {\ker} \iota_M $ and $\eta \colon W \to V$ is an equivariant map i.e.
   $\eta(t \cdot y)= \iota_M (t) \cdot \eta(y)$ inducing a homeomorphism between $W/G$ and $V$.
\end{itemize}
\end{defn}

Note that this definition is a generalization of the axiomatic definition of a quasitoric manifold,
see Section 1 in \cite{DJ}. Let $P$ be an $n$-dimensional simple polytope and 
$\mathcal{F}(P) = \{P_1, \ldots, P_s\}$.
%be the set of all facets of an $n$-dimensional simple polytope $P$.
\begin{defn}\label{dich}
A function $\xi \colon \mathcal{F}(P) \to \ZZ^n$ is called a di-characteristic function if the 
vectors $\xi(P_{j_1}), \ldots, \xi(P_{j_l})$ are linearly independent over $\ZZ$ whenever
the intersection of the facets $P_{j_1}, \ldots, P_{j_l}$ is nonempty.

The vector $ \xi_j = \xi(P_{j})$ is called the di-characteristic vector corresponding to the facet $P_j$
and the pair $(P, \xi)$ is called a characteristic model on $P$.
\end{defn}
\begin{remark}
 If the set $\{\xi(P_{j_1}), \ldots, \xi(P_{j_l})\}$ is a part of a basis of $\ZZ^n$
 over $\ZZ$ whenever the intersection of the facets $P_{j_1}, \ldots, P_{j_l}$ is nonempty, then the map $\xi$
 is called characteristic function on $P$, see Section 1 of \cite{DJ}.
\end{remark}

In Subsection 2.1 of \cite{PS}, the authors construct a quasitoric orbifold from the characteristic
model $(P, \xi)$. Also given a quasitoric orbifold we can associate a characteristic model to it up
to choice of signs of di-characteristic vectors.

\begin{example}\label{td}
 Let $S^{2n+1}=\{(z_0, \ldots, z_n) \in \CC^{n+1} \colon |z_0|^2 + \cdots + |z_n|^2=1\}$ and $a_0, \ldots, a_n$
 be coprime positive integers. Then a weighted action of the circle $S^1$ on $S^{2n+1}$ is given by
$$\alpha \cdot (z_0, \ldots, z_n) = (\alpha^{a_0} z_0, \ldots, \alpha^n z_n) \quad \mbox{for} ~\alpha \in S^1.$$
The orbit space $\mathbb{WP}(a_0, \ldots, a_n) =S^{2n+1}/S^1$ is called a weighted projective space.
 Since $a_0, \ldots, a_n$ are coprime integers, the vector
$\mathfrak{a}=(a_0, \ldots, a_n) \in \ZZ^{n+1}$ determines a circle subgroup $S^1_{\mathfrak{a}}$ of $\TT^{n+1}$.
Then the natural $\TT^{n+1}$-action
on $S^{2n+1}$ induces an action of $\TT^n \cong \TT^{n+1}/S^1_{\mathfrak{a}}$ on $\mathbb{WP}(a_0, \ldots, a_n)$.
With respect to this $\TT^n$-action, $\mathbb{WP}(a_0, \ldots, a_n)$ is a quasitoric orbifold over an $n$-simplex.
For an integer $a > 1$, the space $\mathbb{WP}(1, a)$ is called the teardrop.
The characteristic model for the teardrop is given by $([0,1], \xi)$ where $\xi(\{0\})=-1$ and $\xi(\{1\})=a$ 
(possibly up to sign). \qed
\end{example}

\begin{defn} 
Let $\delta \colon \TT^n \to \TT^n$ be an automorphism. Two quasitoric orbifolds $X_1$ and $X_2$ over the same
polytope $P$ are called $\delta$-equivariantly homeomorphic if there is a homeomorphism $ f \colon X_1 \to X_2$
such that $f(t \cdot x) = \delta(t)\cdot f(x)$ for all $(t, x ) \in \TT^n \times X_1$.
\end{defn}
The automorphism $\delta$ induces an automorphism $\delta_{\ast}$ of $\ZZ^n$.
For the automorphism $\delta$,
%a $\delta$-$translation$ of characteristic model $(P, \xi)$
%is defined by the characteristic model $(P, \delta_{\ast} \circ \xi)$.
two characteristic models $(P, \xi)$ and $(P, \eta)$ are called $\delta$-{\em equivalent} if there is a diffeomorphism
$\psi: P \to P$ (as manifold with corners) such that $\eta(\psi(F))= \pm \delta_{\ast}(\xi(F))$ for all $F \in \mathcal{F}(P)$.
%Note that $\psi(\mathcal{F}(P)) = \mathcal{F}(Q)$
If $\delta$ is identity, then $(P, \xi)$ and $(P, \eta)$ are called {\em equivalent}. 
The following proposition is a classification result which can be found 
in \cite{BP} for quasitoric manifolds (Proposition 5.14) and in \cite{PS} for quasitoric orbifolds (Lemma 2.2).
\begin{prop}\label{probi}
For an automorphism $\delta$ of the torus $\TT^n$, there is a bijection between $\delta$-equivariant homeomorphism
classes of quasitoric orbifolds over $P$ and $\delta$-equivalent classes of characteristic models on $P$.
\end{prop}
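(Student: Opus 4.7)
The plan is to run the argument in two directions, using the explicit model $Y(P,\xi)$ from Subsection~2.1 of \cite{PS} on both sides. Recall that this model realizes a quasitoric orbifold with characteristic data $(P,\xi)$ as a quotient
\begin{equation}
Y(P,\xi) = (\TT^n \times P)/\!\sim_{\xi},
\end{equation}
where $(t_1,p) \sim_\xi (t_2,p)$ iff $t_1 t_2^{-1}$ lies in the subtorus $\TT_{F(p)} \subseteq \TT^n$ generated by the di-characteristic vectors of the facets containing $p$; the $\TT^n$-action is by translation on the first factor, and the orbifold charts near the fixed point over a vertex $v = F_{j_1}\cap\cdots\cap F_{j_n}$ come from the covering homomorphism $\TT_M \to \TT^n$ determined by the lattice $M \subseteq \ZZ^n$ spanned by $\xi_{j_1},\ldots,\xi_{j_n}$.

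First I would show that a $\delta$-translation produces a $\delta$-equivariantly homeomorphic orbifold. Given $(P,\xi)$ and $(P,\delta_*\!\circ\xi)$, define
\begin{equation}
\tilde f : \TT^n \times P \to \TT^n \times P, \qquad \tilde f(t,p) = (\delta(t),p).
\end{equation}
Since $\delta_*$ sends $\Span_\ZZ\{\xi_{j_1},\ldots,\xi_{j_l}\}$ isomorphically onto $\Span_\ZZ\{\delta_*\xi_{j_1},\ldots,\delta_*\xi_{j_l}\}$, the exponentiated subtorus $\TT_{F(p)}$ for $\xi$ is mapped by $\delta$ onto the corresponding subtorus for $\delta_*\!\circ\xi$. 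Hence $\tilde f$ descends to a homeomorphism $f : Y(P,\xi) \to Y(P,\delta_*\!\circ\xi)$ that is $\delta$-equivariant by construction and smooth on orbifold charts since $\delta$ is an automorphism. This gives a well-defined map from $\delta$-translations of characteristic models to $\delta$-equivariant homeomorphism classes of quasitoric orbifolds over $P$.

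Next I would prove injectivity and surjectivity of this map. For surjectivity, the construction just before Proposition~\ref{probi} already associates to any quasitoric orbifold $X$ over $P$ a characteristic model $(P,\xi_X)$, well-defined up to signs, so the map hits every class. For injectivity (the main content), suppose $f: X_1 \to X_2$ is a $\delta$-equivariant homeomorphism over $\mathrm{id}_P$. Over each facet $F_j$ the stabilizer subtorus in $\TT^n$ is, by axiom (A3), the image $\iota_M(\TT_M)$ for the rank-one sublattice $\ZZ\cdot\xi_j$; $\delta$-equivariance forces this subtorus for $X_1$ to be carried by $\delta$ to the corresponding one for $X_2$, so $\delta_*\xi_{1,j} = \pm\xi_{2,j}$. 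Absorbing signs into the ambiguity already present in the definition of di-characteristic vectors, we conclude $(P,\xi_2) = (P,\delta_*\!\circ\xi_1)$ as characteristic models.

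The main obstacle I expect is the bookkeeping around axiom (A3): one must check that $\delta$-equivariance of $f$ at points over interior points of faces (not just vertices) is enough to pin down $\xi$ up to sign, and that the orbifold structure (not merely the underlying topological $\TT^n$-space) is recovered from the characteristic model. Both reduce to the local model near a vertex, where the covering $\iota_M:\TT_M\to\TT^n$ is determined by the lattice spanned by the characteristic vectors, so one can lift $f$ locally to the uniformizing chart and verify that $\delta$-equivariance upstairs is equivalent to $\delta_*$ carrying one lattice to the other. Once this local identification is in place the global bijection follows by gluing the local equivariant lifts, exactly as in the manifold case (Proposition~5.14 of \cite{BP}).
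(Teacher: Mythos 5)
The paper gives no proof of this proposition: it is imported verbatim, with the citation to Proposition 5.14 of \cite{BP} for the manifold case and Lemma 2.2 of \cite{PS} for the orbifold case, so there is nothing in the text to compare your argument against except those references. Your sketch reproduces the standard argument from those sources and is essentially sound: the map $(t,p)\mapsto(\delta(t),p)$ does descend to a $\delta$-equivariant homeomorphism $Y(P,\xi)\to Y(P,\delta_*\circ\xi)$ because $\delta$ carries the subtorus exponentiated from $\Span_{\ZZ}\{\xi_{j_1},\dots,\xi_{j_l}\}$ onto the corresponding subtorus for the translated data, and reading off the facet isotropy circles is the correct way to recover $\xi$ up to sign from the $\TT^n$-action.

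Two points would need to be made explicit in a full write-up. First, your surjectivity step conflates ``a characteristic model can be associated to $X$'' with the stronger reconstruction statement that $X$ is equivariantly homeomorphic, as an orbifold, to the model $Y(P,\xi_X)$ built from that data; that reconstruction (the orbifold analogue of the Davis--Januszkiewicz lemma, carried out in Section 2 of \cite{PS}) is where axioms (A1)--(A3) are actually used, and you only gesture at it in your final paragraph. Second, your injectivity argument assumes $f$ covers $\mathrm{id}_P$; the paper's definition of $\delta$-equivariant homeomorphism only guarantees that $f$ descends to some face-preserving self-homeomorphism of $P$, so either that convention must be imposed (as it is in \cite{BP}) or the conclusion weakens to $\delta_*\circ\xi_1=\pm\,\xi_2\circ\sigma$ for the induced permutation $\sigma$ of facets. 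The sign ambiguity you absorb is consistent with the paper's remark that the characteristic model of a quasitoric orbifold is determined only up to the signs of the di-characteristic vectors, so that part is fine.
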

Suppose $\delta$ is the identity automorphism of $\TT^n$. Proposition \ref{probi} implies that two quasitoric
orbifolds over $P$ are equivariantly homeomorphic if and only if their di-characteristic models are equivalent.

Let $X$ be a quasitoric orbifold with the orbit map $\pi \colon X \to P$.
There are important closed $\TT^n$-invariant suborbifolds of $X$ which are corresponding to the faces of $P$.
If $F$ is a codimension $k$ face of $P$, then
define $X(F) = \pi^{-1}(F)$. The space $X(F)$ with subspace topology is a quasitoric orbifold of dimension $2n-2k$.
If $F$ is a facet of $P$ then $X(F)$ is called a {\it characteristic suborbifold} of $X$. Note that
a choice of orientation on $\TT^n $ and $ P$ give an orientation on the orbifold $X$.
\begin{defn}
 An {\it omniorientation} of a quasitoric orbifold $X$ is a choice of orientation
for $X$ as well as an orientation for each characteristic suborbifold of $X$.
\end{defn}
Clearly the isotropy group of a characteristic suborbifold is a circle subgroup of $\TT^n$.
So there is a natural $S^1$-action on the normal bundle (possibly an orbibundle) of that characteristic suborbifold.
Thus the normal bundle has a complex structure and consequently an orientation. Whenever the
sign of the characteristic vector of a facet is reverse, we get the opposite orientation on
the normal bundle. An orientation on the normal bundle together with an orientation on $X$
induces an orientation on the characteristic suborbifold. So a di-characteristic function
determines a natural omniorientation. We call this omniorientation the {\it characteristic omniorientation}.

A toric variety $X_{\Sigma}$ associated to the simplicial fan $\Sigma$ is called a toric orbifold.
The space $X_{\Sigma}$ is compact if and only if $\Sigma$ is complete. More about toric varieties
can be found in \cite{CK}. 

%As a topological counter part, we may give the following definition.
\begin{defn}
Let $\Sigma$ be a complete simplicial fan in $\RR^n$ with $n+1$ many one-dimensional cones. The associated
toric orbifold $X_{\Sigma}$ is called a $2n$-dimensional complex orbifold projective space. 
%A quasitoric orbifold over a simplex is equivariantly homeomorphic to a fake weighted projective space.
\end{defn}
\begin{lemma}\label{cops}
 Let $X$ be a quasitoric orbifold over $\bigtriangleup^n$. Then $X$ is equivariantly homeomorphic to a
 $2n$-dimensional complex orbifold projective space.
\end{lemma}
\begin{proof}
Let $X$ be a quasitoric orbifold over $\bigtriangleup^n$ and $\mathcal{F}(\bigtriangleup^n) =
\{F_0, \ldots, F_n\}$. Let $$\xi : \mathcal{F}(\bigtriangleup^n) \to \ZZ^n$$ be the associated
di-characteristic function. Suppose $\xi_i = \xi(F_i)$ for $i=0, \ldots, n$.
So $\{\xi_0, \ldots, \hat{\xi_i}, \ldots, \xi_n\}$ is a linearly independent set in $\ZZ^n$ for $i=0, \ldots, n$
where $\hat{}$ represents the omission of the corresponding entry. Let
$$\xi_0 = a_1\xi_1 + \cdots + a_n \xi_n$$ for some $a_1, \ldots, a_n \in \QQ$. Then $a_i \neq 0$
for all $i \in \{1, \ldots, n\}$. Suppose $a_{i_1}, \ldots, a_{i_l} \in \QQ_{>0}$. Define
$\eta \colon : \mathcal{F}(\bigtriangleup^n) \to \ZZ^n $ by
\begin{equation}%\label{charp}
\eta(F_j) = \left\{ \begin{array}{ll} -\xi_{j} & \mbox{if} ~ j \in \{i_1, \ldots, i_l\}\\
\xi_j & \mbox{if} ~ j \in \{0, \ldots, n\} - \{i_1, \ldots, i_l\}.
\end{array} \right.
\end{equation}
Let $\eta_j = \eta(F_j)$ for $j=0, \ldots, n$ and 
$$
b_j = \left\{ \begin{array}{ll} -a_{j} & \mbox{if} ~ j \in \{i_1, \ldots, i_l\}\\
a_j & \mbox{if} ~ j \in \{0, \ldots, n\} - \{i_1, \ldots, i_l\}.
\end{array} \right.
$$
So $ b_j < 0$ for $j=1, \ldots, n$ and $\eta_0= b_1 \eta_1 + \cdots + b_n \eta_n$.
Therefore $\eta_0, \ldots, \eta_n$ are the one-dimensional cones of a complete simplicial fan $\Sigma$ in $\RR^n$. Let
$X_{\Sigma}$ be the associated toric orbifold. So $X_{\Sigma}$ is a complex orbifold projective space. With respect
to the compact $n$-torus action on $X_{\Sigma}$, it is a quasitoric orbifold where the corresponding characteristic
model is $(\bigtriangleup^n, \eta)$. Therefore by Proposition \ref{probi}, $X$ and $X_{\Sigma}$ are equivariantly
homeomorphic. 
\end{proof}
Fake weighted projective space is a holomorphic generalization of weighted projective space, see \cite{Ka}.
A $2n$-dimensional fake weighted projective space is defined by a complete simplicial fan generated by
$n+1$ many primitive vectors in $\ZZ^n$. So a fake weighted projective space
is a complex orbifold projective space. Since the primitive vectors in $\ZZ$ are $-1 $ and $ 1$, the teardrop
$\mathbb{WP}(1, a)$ is not a fake weighted projective space if $a > 1$. But $\mathbb{WP}(1, a)$ is a
complex orbifold projective space.

\section{Construction of orbifolds with quasitoric boundary}\label{def}
In this section we construct some oriented orbifolds with quasitoric boundary. 
This construction is a generalization of the construction of manifolds with quasitoric boundary of Section
$4$ in \cite{Sar2}.
\begin{defn}
An $(n+1)$-dimensional simple polytope $Q$ in $\RR^{n+1}$ is said to be polytope with exceptional facets
$\{Q_1, \ldots, Q_k\}$ if $Q_i \cap Q_j$ is empty for $i \neq j$ with $1 \leq i, j \leq k$ and $V(Q) = \cup_{i=1}^k V(Q_i)$.
We denote $\{Q \backslash Q_1, \ldots, Q_k\}$ for a simple polytope with exceptional facets.
\end{defn}
%Clearly, the vertex cut of an edge-simple polytope $P$ with $k$ vertices is a face-simple manifold with exceptional
%facets $\{P_{H_i} i=1, \ldots, k\}$. Let $Y$ be an $(n+1)$-dimensional face-simple manifold with exceptional facets.
Let $ \mathcal{F}(Q) = \{F_1, \ldots, F_m\} \cup \{Q_1, \ldots, Q_k\}$ be the facets of $Q$ where $\{Q_1,
\ldots, Q_k\}$ are the exceptional facets.
\begin{defn}\label{isofun}
A function $ \lambda \colon \{F_1, \ldots, F_m\} \to \ZZ^{n}$ is called an isotropy function on
$\{Q \backslash Q_1, \ldots, Q_k\}$ if
the vectors $ \lambda(F_{i_1}), \ldots, \lambda(F_{i_{q}})$ are linearly independent in $\ZZ^{n}$
whenever the intersection of the facets $ F_{i_1}, \ldots,$ $ F_{i_q}$ is nonempty. The vector $\lambda_i =
\lambda(F_i)$ is called an isotropy vector assigned to the facet $F_i$ for $i=1, \ldots, m$.
\end{defn}
We define the isotropy function on some polytopes with exceptional facets in Example \ref{egchar2}.
%\subsection{Manifolds with Quasitoric Boundary}\label{defqb}
\begin{remark}\label{re1}
Since $\ZZ^n$ is not the union of finitely many proper submodules over $\ZZ$, we can define the isotropy function
on any polytope with exceptional facets.
\end{remark}

We adhere the notations of Definition \ref{isofun}.
Let $ F $ be a codimension $l$ face of $ Q $ with $ 0< l \leq n+1$. If $F$ is a face of $Q_i$ for some
$i \in \{1, \ldots, k\}$, then $ F $ is the intersection of a unique collection of $ l $ facets $F_{i_1},
\ldots, F_{i_{l-1}}, Q_i$ of $ Q $. Otherwise, $ F $ is the intersection of a unique collection of $ l $
facets $F_{i_1}, \ldots, F_{i_{l}} \in \{F_1, \ldots, F_m\}$ of $Q$. Let
\begin{equation}%\label{charp}
M(F) = \left\{ \begin{array}{ll} \< \{\lambda_{i_j} \colon j = 1, \ldots, l-1\}\> \subseteq \ZZ^{n} & \mbox{if} ~ F =
F_{i_1} \cap \cdots \cap F_{i_{l-1}} \cap Q_i\\
\< \{\lambda_{i_j} \colon j = 1, \ldots, l\}\> \subseteq \ZZ^{n} & \mbox{if} ~ F = F_{i_1} \cap \cdots \cap F_{i_{l}}
\end{array} \right.
\end{equation}
where $\<\{\alpha_i : i=1, \ldots, s\}\>$ denotes the submodule
 generated by the vectors $\{\alpha_{i} \colon i = 1, \ldots, s\}$ of $\ZZ^n$.
 %if $F = F_{i_1} \cap \cdots \cap F_{i_{l-1}} \cap Q_i$ and $\{\lambda_{i_j} \colon j=1, \ldots, l\}$
%if $F = F_{i_1} \cap \cdots \cap F_{i_{l}}$.
So
\begin{equation}
\TT_{M(F)} = (M(F) \tensor_{\ZZ} \RR)/M(F)
\end{equation}
is a compact torus of dimension $l-1$ or $l$ depending on the situation of the face $F$.
%is a face of $Q_i$ for some $i \in \{1, \ldots, k\}$ and otherwise respectively.
Adopt the convention that $\TT_{M(Q)} = 1= \TT_{M(Q_i)}$ for $i=1,
\ldots, k$.  The inclusion $M(F) \hookrightarrow \ZZ^n$ induces a natural homomorphism $$f_F \colon \TT_{M(F)} \to \TT^{n}$$
for any face $F$ of $Q$. Denote the image of $f_F$ by $Im(f_F)$. Define an equivalence relation
$\sim_b$ on the product $ \TT^{n} \times Q$ as follows,
\begin{equation}\label{equilam}
(t, x) \sim_b (u, y) \quad \mbox{if and only if} \quad x = y ~ \mbox{ and} ~ tu^{-1} \in Im(f_F)
\end{equation}
where $ F $ is the unique face of $ Q $ containing $ y $ in its relative interior. We denote the quotient
space $ (\TT^{n} \times Q)/ \sim_b $ by $ W(Q, \lambda)$ and the equivalence class of $(t, x)$ by $[t, x]^{\sim_b}$.
The space $W(Q, \lambda)$ is a $\TT^{n}$-space where the action is induced by the group
operation in $\TT^{n}$. Let $$\mathfrak{q} \colon W(Q, \lambda) \to Q$$ be the projection map defined by
$\mathfrak{q}([t,x]^{\sim_b}) = x$.
We consider the standard orientation of $\mathbb{T}^{n}$ and the orientation of $Q$ induced from the ambient space $\RR^{n+1}$.

\begin{lemma}\label{orbbd}
The space $ W(Q, \lambda) $ is a $(2n+1)$-dimensional oriented orbifold with boundary and the boundary is a
disjoint union of $2n$-dimensional quasitoric orbifolds.
\end{lemma}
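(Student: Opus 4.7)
The plan is to construct orbifold and orbifold-with-boundary charts stratum-by-stratum, imitating Subsection~2.1 of \cite{PS} and Section~4 of \cite{Sar2}, but accounting for the extra polytopal dimension and for the special role of the exceptional facets (the convention $\TT_{M(Q_i)} = 1$ prevents any circle from being collapsed over them). Since $Q_i \cap Q_j = \emptyset$ for $i \neq j$ and $V(Q) = \bigcup_i V(Q_i)$, every face of $Q$ falls into one of three disjoint types: (i) the interior of $Q$; (ii) a positive-codimension face $F$ not contained in any exceptional facet; (iii) a face $F$ contained in a unique $Q_i$.

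Over the interior of $Q$ the relation $\sim_b$ is trivial, so $\pi^{-1}(\mathrm{int}\,Q) \cong \mathrm{int}\,Q \times \TT^n$ is a smooth $(2n+1)$-manifold. For a face $F = F_{i_1}\cap\cdots\cap F_{i_l}$ of type (ii), linear independence of $\lambda_{i_1},\ldots,\lambda_{i_l}$ over $\ZZ$ (Definition~\ref{isofun}) implies that the induced map $\iota_F : \TT^l \to \TT^n$ has finite kernel $G_F$. Taking a simple-cone neighbourhood $\RR^l_{\geq 0} \times \RR^{n+1-l}$ of a relatively interior point of $F$, the construction of \cite{PS} produces an orbifold chart $\widetilde V_F := \CC^l \times \RR^{n+1-l} \times \TT^{n-l}$ with $G_F$ acting on the $\CC^l$-factor through $\iota_F$, whose quotient is $\TT^n$-equivariantly homeomorphic to the $\pi$-preimage of the neighbourhood (real dimension $2l + (n+1-l) + (n-l) = 2n+1$). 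For a face $F = Q_i \cap F_{i_1}\cap\cdots\cap F_{i_{l-1}}$ of type (iii), the $Q_i$-direction contributes no circle to be collapsed, so the local polytope model is $\RR_{\geq 0} \times \RR^{l-1}_{\geq 0} \times \RR^{n+1-l}$ and the corresponding chart is $\widetilde V_F := \RR_{\geq 0} \times \CC^{l-1} \times \RR^{n+1-l} \times \TT^{n-l+1}$, with $G_F \subset \TT^{l-1}$ acting only on the $\CC^{l-1}$-factor; the $\RR_{\geq 0}$-direction is inert and furnishes the boundary, so the quotient is an orbifold-with-boundary chart in the sense of Definition~1.3 of \cite{Dru1}.

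Assembling the three families of charts yields that $W(Q,\lambda)$ is a $(2n+1)$-dimensional orbifold with $\partial W(Q,\lambda) = \bigsqcup_{i=1}^k \pi^{-1}(Q_i)$, the union being disjoint by $Q_i \cap Q_j = \emptyset$. Each boundary component $\pi^{-1}(Q_i)$ is precisely the quasitoric orbifold of the characteristic model $(Q_i, \lambda|_{Q_i})$, where $\lambda|_{Q_i}$ assigns $\lambda(F_j)$ to the facet $F_j \cap Q_i$ of $Q_i$; the di-characteristic condition of Definition~\ref{dich} for $\lambda|_{Q_i}$ is inherited from Definition~\ref{isofun} because every vertex of $Q_i$ is a vertex of $Q$ lying in exactly $n$ non-exceptional facets. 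The standard orientation of $\TT^n$ together with the orientation of $Q$ inherited from $\RR^{n+1}$ orient the top stratum $\mathrm{int}\,Q \times \TT^n$, and this orientation extends across $W(Q,\lambda)$ because the orbifold-singular locus has real codimension at least two.

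The main obstacle is the chart-compatibility check between types (ii) and (iii) along faces of the form $F_j \cap Q_i$: one must verify that the inert $\RR_{\geq 0}$-direction of a type-(iii) chart patches smoothly onto the corresponding positive-coordinate slab of a neighbouring type-(ii) chart centred on $F_j$, and similarly for deeper faces. This reduces, as in \cite{PS}, to the combinatorial fact that for nested faces $F \subset F'$ the generators of $M(F')$ form a subset of those of $M(F)$, so the $G_{F'}$-action is the restriction of the $G_F$-action and the chart changes are tautological on overlaps; it is a bookkeeping exercise rather than a new difficulty.
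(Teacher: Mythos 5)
Your proposal is correct, but it takes a genuinely different route from the paper. The paper never builds charts face by face: it sets $C(Q_j)$ to be the union of all faces of $Q$ disjoint from $Q_j$ and uses the hypothesis $V(Q)=\bigcup_i V(Q_i)$ to show that the collar-like sets $U(Q_j)=Q-C(Q_j)\cong Q_j\times[0,1)$ cover $Q$; since $\sim_b$ is constant in the $[0,1)$-direction, each piece $(\TT^n\times U(Q_j))/\sim_b$ is homeomorphic to $X(Q_j,\xi^j)\times[0,1)$, where $X(Q_j,\xi^j)$ is the quasitoric orbifold imported as a black box from Subsection 2.1 of \cite{PS}. This makes both the orbifold-with-boundary structure and the identification of the boundary components immediate, at the cost of hiding all local structure inside the cited construction. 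Your stratum-by-stratum chart construction is more explicit and self-contained --- it exhibits the local isotropy groups and shows exactly where the inert $\RR_{\geq 0}$-direction produces boundary charts --- but it must then re-assert the chart-compatibility bookkeeping that the paper's collar decomposition sidesteps entirely. One point worth making explicit in your version: the hypothesis $V(Q)=\bigcup_i V(Q_i)$ enters your argument only implicitly, through the fact that an isotropy function exists at all (a vertex lying in no exceptional facet would be the intersection of $n+1$ facets carrying $n+1$ linearly independent vectors in $\ZZ^n$, which is impossible); this is what guarantees $l\leq n$ for your type (ii) faces so that the chart $\CC^l\times\RR^{n+1-l}\times\TT^{n-l}$ makes sense, whereas in the paper the same hypothesis is what makes $\{U(Q_j)\}$ a cover of $Q$.
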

\begin{proof}
Let $C_j=\{F : F ~ \mbox{is a face of} ~ Q ~\mbox{and}~ F \cap Q_j ~\mbox{is empty}\}$
and $$U_j = Q - \cup_{F \in C_j} F$$ for $j=1, \ldots, k$. Since $Q$ is a simple polytope,
$U_j$ is diffeomorphic as manifold with corners to $Q_j \times [0, 1)$ and $Q = \cup_{j=1}^k U_{j}$.
Let $$ f_j \colon U_j \to Q_j \times [0,1)$$ be a diffeomorphism. Note that the facets of $Q_j$ are
$\{Q_j \cap F_{j_1}, \ldots, Q_{j} \cap F_{j_l}\}$ for some facets $F_{j_1}, \ldots, F_{j_l} \in \{F_1, \ldots, F_m\}$.
The restriction of the isotropy function $\lambda$ on the facets of $Q_j$ is given by
$$\xi^j(Q_j \cap F_{j_i})= \lambda_{j_i} \quad \mbox{for} \quad i = 1, \ldots, l.$$
By definition of $\lambda$, we can see that $\xi^j$ is a di-characteristic function on $Q_j$.
So by Subsection 2.1 of \cite{PS} the space $X(Q_j, \xi^j) =(\TT^{n} \times Q_j)/\sim_b$ is a $2n$-dimensional
quasitoric orbifold for $j=1, \ldots, k$.
From the equivalence relation $\sim_b$ in (\ref{equilam}), we have the following commutative diagram where
lower horizontal maps are homeomorphisms.
$$
\begin{CD}
\TT^n \times U_j @>Id \times f_i>> \TT^n \times Q_j \times [0, 1) @.\\
@VVV  @VVV @.\\
(\TT^{n} \times U_j)/\sim_b @>h_i>> ((\TT^{n} \times Q_j)/ \sim_b)\times [0,1) @>\cong>> X(Q_j, \xi^j) \times [0, 1).
\end{CD}
$$
%it is clear that $f_j$ induces a homeomorphism $$h_j \colon (T^{n} \times U(Q_j))/\sim_b \to
%((T^{n} \times Q_j)/ \sim_b)\times [0,1) = X(Q_j, \xi^j) \times [0, 1).$$
So $$W(Q, \lambda)~ = ~ \bigcup_{j=1}^k (T^{n} \times U_j)/\sim_b ~\cong ~\bigcup_{j=1}^k (X(Q_j, \xi^j) \times [0, 1)).$$
Hence $W(Q, \lambda)$ is an orbifold with boundary where the boundary is the disjoint union of quasitoric orbifolds
$\{X(Q_j, \xi^j) \colon j = 1, \ldots, k\}$. Clearly orientations of $\mathbb{T}^{n}$ and $Q$ induce an
orientation of $W(Q, \lambda)$.
\end{proof}

Suppose $\lambda$ satisfies the following condition $\colon$ the set of vectors $ \{\lambda(F_{i_1}), \ldots,$ $ \lambda(F_{i_{l}})\}$
is a part of a basis of $\ZZ^{n}$ over $\ZZ$ whenever the intersection of the facets $ \{F_{i_1}, \ldots,$ $ F_{i_l}\}$
is nonempty. Then all the quasitoric orbifolds in the proof of Lemma \ref{orbbd} are quasitoric manifolds. So, in this
case we have the following corollary.
\begin{corollary}\label{cobm}
With the assumption in the above paragraph, the space $ W(Q, \lambda) $ is a $(2n+1)$-dimensional oriented manifold
with boundary. The boundary is a disjoint union of quasitoric manifolds.
\end{corollary}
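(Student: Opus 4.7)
The plan is to re-run the proof of Lemma \ref{orbbd} verbatim under the stronger hypothesis, observing at each step that the orbifold charts that appeared there are upgraded to honest smooth manifold charts. The key algebraic point is that the condition ``$\lambda(F_{i_1}), \ldots, \lambda(F_{i_k})$ extends to a $\ZZ$-basis of $\ZZ^n$'' is exactly the condition that makes the isotropy subgroup $G = \ker \iota_M$ in axiom A3 trivial: the injection $\iota: M \hookrightarrow \ZZ^n$ becomes an inclusion of a direct summand, so $\iota_M: \TT_M \to \TT^n$ is injective, and the chart $(W, G, \eta)$ reduces to a diffeomorphism $W \to V$.

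First I would verify that the restricted function $\xi^j$ on the facets of $Q_j$ is a genuine characteristic function in the sense of Davis--Januszkiewicz \cite{DJ}, not merely a di-characteristic function. Fix a vertex $v$ of $Q_j$; since $Q$ is simple and $v$ lies on $Q_j$, the vertex $v$ is the intersection of $Q_j$ with exactly $n$ facets $F_{j_{i_1}}, \ldots, F_{j_{i_n}}$ from $\{F_1, \ldots, F_m\}$, and the hypothesis forces $\lambda_{j_{i_1}}, \ldots, \lambda_{j_{i_n}}$ to be a full $\ZZ$-basis of $\ZZ^n$ (a linearly independent set of $n$ vectors that extends to a basis is already a basis). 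Hence by Section~1 of \cite{DJ}, the quotient $X(Q_j, \xi^j) = (\TT^n \times Q_j)/\!\sim_b$ is a $2n$-dimensional quasitoric \emph{manifold} for each $j \in \{1, \ldots, k\}$.

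Next I would assemble the global manifold structure on $W(Q, \lambda)$ from the collar decomposition used in Lemma \ref{orbbd}. The homeomorphisms
\[
h_j : (\TT^n \times U(Q_j))/\!\sim_b \;\longrightarrow\; X(Q_j, \xi^j) \times [0,1)
\]
are now diffeomorphisms onto a product of a smooth manifold with a half-open interval, so each piece $(\TT^n \times U(Q_j))/\!\sim_b$ is a smooth $(2n+1)$-manifold with boundary $X(Q_j, \xi^j) \times \{0\}$. Since $W(Q, \lambda) = \bigcup_{j=1}^{k} (\TT^n \times U(Q_j))/\!\sim_b$ and the overlaps lie in the interior (where $G$ is again trivial and the torus action together with the polytope coordinates give standard product charts), these glue to a global smooth structure. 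The boundary is exactly $\sqcup_{j=1}^{k} X(Q_j, \xi^j)$, a disjoint union of quasitoric manifolds, and the orientation transported from $\TT^n$ and $Q$ in Lemma \ref{orbbd} carries over unchanged.

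There is no serious obstacle here: the corollary is a direct strengthening of Lemma \ref{orbbd}, and the only substantive check is the algebraic observation that a partial $\ZZ$-basis kills the isotropy group $G$ at every face simultaneously, so that all orbifold charts constructed in the proof of Lemma \ref{orbbd} are smooth manifold charts. Once that is noted, the argument of Lemma \ref{orbbd} transports verbatim.
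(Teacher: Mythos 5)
Your proposal is correct and follows the same route as the paper, which simply observes that under the basis condition all the quasitoric orbifolds appearing in the proof of Lemma \ref{orbbd} become quasitoric manifolds, so the collar decomposition there yields a manifold with boundary. Your added checks (that $n$ vectors forming part of a $\ZZ$-basis of $\ZZ^n$ are a basis, hence each $\xi^j$ is a Davis--Januszkiewicz characteristic function, and that the overlaps of the collars lie in the interior) are exactly the details the paper leaves implicit.
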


\begin{example}\label{egchar2}
Some isotropy functions on the polytopes $Q$ and $Q^{\prime}$ with exceptional facets are given in
the Figure \ref{egc2}. In the left picture of Figure \ref{egc2}, $Q_1, Q_2, Q_3, Q_4$ are exceptional
facets which are triangles. The restriction of the isotropy function on $Q_{i}$ gives that the space
$(\TT^2 \times Q_i)/\sim_b$ is a complex orbifold projective space for $i\in \{1,2,3,4\}$.
So $W(Q, \lambda)$ is an oriented orbifold with boundary where the boundary is the disjoint union
of distinct $4$-dimensional complex orbifold projective spaces.

In the right picture of Figure \ref{egc2}, $Q_1,Q_{2}, Q_{3}, Q_{4}$ and $Q_5$ are exceptional facets where $Q_1, \ldots, Q_4$
are triangles and $Q_{5}$ is a rectangle. The restriction of the isotropy function on $Q_{i}$ gives that the space
$M_i = (\TT^2 \times Q_i)/\sim_b$ is a complex orbifold projective space for $i\in \{1,2,3,4\}$ and
$X(Q_5, \lambda^5) = (\TT^2 \times Q_5)/\sim_b$ is a quasitoric orbifold. Hence the space
$\sqcup_{i=1}^4 M_i \sqcup X(Q_5, \lambda^5)$ is the boundary of the oriented orbifold $W(Q^{\prime}, \lambda)$.
\begin{figure}[ht]
        \centerline{
           \scalebox{0.70}{
            \input{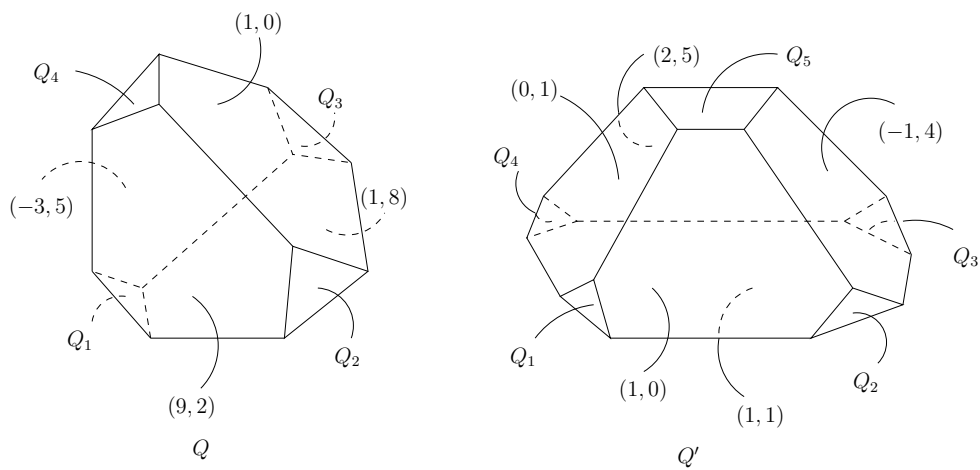}
            }
          }
       \caption {Isotropy functions of some polytopes with exceptional facets.}
        \label{egc2}
      \end{figure}
      \qed
\end{example}

\section{Stable complex structure and complex cobordism}\label{ccob}
Stable complex structure of quasitoric manifolds and quasitoric orbifolds are studied in \cite{DJ} and \cite{PS}
respectively.
In this section first we show the existence of stable complex structure on orbifolds with quasitoric boundary $W(Q, \lambda)$.
Then we compute the complex orbifold cobordism class of a quasitoric orbifold explicitly.
%Recall definitions of stable and almost complex structure of orbifolds from Section \ref{orb}.
At the end of this section we give some computation in $\Omega^U$.
Similarly as the manifolds case, we may define complex cobordism of orbifolds.
\begin{defn}\label{bdm}
Let $Y$ be a topological space and $X_1$, $X_2$ be two $n$-dimensional stable complex orbifolds.
Let $ h_i \colon X_i \to Y$ be a continuous map for $i=1, 2$. Then $h_1$ and $h_2$ are bordant if there exists a
stable complex orbifold $Z$ of dimension $n+1$ with $\partial{Z} = X_1 \sqcup X_2$
and a continuous map $H \colon Z \to Y$ such that $H|_{\partial{Z}}= h_1 \sqcup h_2$. 
%The space $Z$ is called a bordism between $X_1$ and $X_2$.
\end{defn}
So the Definition \ref{bdm} induces an equivalence relation on the collection
$$\{(X, h) \colon X~ \mbox{is a stable complex orbifold and}~  h \colon X \to Y ~ \mbox{is a continuous map}\}.$$
We denote the equivalence class of $(X, h)$ by $[X, h]$ or $[X]$ if the map $h$ and the stable complex structure
on $X$ are clear.
Let $OB_n^{U}(Y)= \{[X, h] \colon \dim{X} =n\}$. The disjoint union induces an abelian group structure
on $OB_n^{U}(Y)$. The group $OB_{n}^U(Y)$ is called the $n$-th complex orbifold bordism group of $Y$.
Let $OB_{\ast}^{U}(Y) = \cup_{n} OB_n^{U}(Y)$. Then the Cartesian product endow the structure
of a graded ring on $OB_{\ast}^{U}(Y)$, called the complex orbifold bordism ring of $Y$.
\begin{defn}
The complex orbifold bordism groups and ring of a point are called complex orbifold cobordism groups and ring respectively.
%These groups and ring are denoted by $OB_n^{U}(pt)$ and $OB_{\ast}^{U}(pt)$ respectively.
%When $Y=\{pt\}$ and the stable almost complex structure on $X$ is clear,  we denote the cobordism class $[X, h]$ by $[X]$. 
\end{defn}
At this moment we do not know about the generators of the group $OB_n^U(Y)$ as well as many other questions
which may arise from the theory of complex cobordism for manifolds. However the complex cobordism ring
$\Omega^U$ is a subring of $OB_{\ast}^U(pt)$. We adhere the notations of Section \ref{def}.

% Now we show that the orbifold with boundary $W(Q, \lambda)$ which
%is constructed in Section \ref{def} has stable complex structure.
\begin{lemma}\label{qto}
The orbifold with boundary $W(Q, \lambda)$ is an orbit space of a circle action on a quasitoric orbifold.
\end{lemma}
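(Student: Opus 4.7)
The plan is to realize $W(Q,\lambda)$ as the quotient of a $(2n+2)$-dimensional quasitoric orbifold $X(Q,\widetilde\lambda)$ over the same polytope $Q$ by a distinguished subcircle of $\TT^{n+1}$. Concretely, I would extend the isotropy function $\lambda$ to a function $\widetilde\lambda \colon \mc{F}(Q) \to \ZZ^{n+1} = \ZZ^n \oplus \ZZ$ on all facets of $Q$ by setting
\[
\widetilde\lambda(F_i) := (\lambda_i, 0), \qquad \widetilde\lambda(Q_j) := (0,\ldots,0,1) =: e_{n+1},
\]
for $i = 1,\ldots,m$ and $j = 1,\ldots,k$.

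To verify that $\widetilde\lambda$ is a di-characteristic function in the sense of Definition \ref{dich}, one checks linear independence over $\ZZ$ on any collection of facets of $Q$ with nonempty intersection. When no exceptional facet is involved, the vectors $(\lambda_{i_1},0),\ldots,(\lambda_{i_l},0)$ inherit linear independence from the isotropy condition on $\lambda$. When an exceptional facet $Q_j$ appears, the hypothesis $Q_i \cap Q_j = \emptyset$ for $i \ne j$ ensures that $e_{n+1}$ is the only exceptional contribution, and it is trivially linearly independent from any set of vectors in $\ZZ^n \oplus \{0\}$. The construction recalled from Subsection 2.1 of \cite{PS} then produces a $(2n+2)$-dimensional quasitoric orbifold $X(Q,\widetilde\lambda)$ with orbit map $\widetilde\pi \colon X(Q,\widetilde\lambda) \to Q$ and an action of $\TT^{n+1}$.

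Finally, I would take $S^1_0 := \{1\}^n \times S^1 \subset \TT^{n+1}$, the subcircle corresponding to $e_{n+1}$, and identify $X(Q,\widetilde\lambda)/S^1_0$ with $W(Q,\lambda)$ using the projection $q \colon \TT^{n+1} \to \TT^{n+1}/S^1_0 \cong \TT^n$. Both spaces are built as quotients of $\TT^{n+1} \times Q$ (respectively $\TT^n \times Q$ after dividing by $S^1_0$) by explicit fiberwise equivalence relations, so the argument reduces to checking that $q \times \mathrm{id}_Q$ descends to a $\TT^n$-equivariant homeomorphism of quotients. The main obstacle is the straightforward but delicate bookkeeping that matches submodules face-by-face: for a face $F \subset Q$ contained in some $Q_j$, the submodule of $\ZZ^{n+1}$ spanned by the $\widetilde\lambda$-vectors of the facets through $F$ contains $e_{n+1}$, and quotienting by $S^1_0$ projects it onto exactly the module $M(F) \subset \ZZ^n$ of Section \ref{def}; for a face not contained in any $Q_j$, the $e_{n+1}$-direction does not appear and the comparison is immediate. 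Under this identification, the fixed locus of $S^1_0$ on $X(Q,\widetilde\lambda)$ is precisely $\bigsqcup_{j=1}^k \widetilde\pi^{-1}(Q_j)$, which recovers the quasitoric boundary $\bigsqcup_j X(Q_j,\xi^j)$ of Lemma \ref{orbbd}, completing the proof.
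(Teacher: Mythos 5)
Your proposal is correct and follows essentially the same route as the paper: the paper's proof defines exactly the extension $\eta(Q_j)=(0,\ldots,0,1)$, $\eta(F_i)=(\lambda_i,0)$, forms the quasitoric orbifold $X(Q,\eta)$, and exhibits $W(Q,\lambda)$ as the orbit space of the circle $\TT_Q$ corresponding to $\{0\}^n\times\ZZ$. Your added checks (that $\widetilde\lambda$ is di-characteristic because the $Q_j$ are pairwise disjoint, and the face-by-face matching of submodules under the quotient by $S^1_0$) are details the paper leaves implicit, and they are correct.
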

\begin{proof}
Let $\{Q \backslash Q_1, \ldots, Q_k\} $ be a simple $(n+1)$-polytope with exceptional facets
and $$ \mathcal{F}({Q}) = \{ F_{i} \colon i= 1, \ldots, m\} \cup \{Q_j \colon j = 1, \ldots, k\}.$$
Let $\lambda$ be an isotropy function on $\{Q \backslash Q_1, \ldots, Q_{k}\}$.
We define a function $ \eta \colon \mathcal{F}(Q) \to \ZZ^{n+1}$ as follows,
\begin{equation}\label{charp}
\eta( F ) = \left\{ \begin{array}{ll} ( 0, \ldots, 0, 1 ) \in \ZZ^{n+1} & \mbox{if} ~ F = Q_j~ \mbox{and} ~ j~ \in \{1, \ldots, k\} \\
{(\lambda_i, 0)} \in \ZZ^{n} \times\{0\} \subset \ZZ^{n+1} & \mbox{if}~ F = F_i ~ \mbox{and} ~ i \in \{ 1, \ldots, m\}.
\end{array} \right.
\end{equation}
So the function $ \eta $ is a di-characteristic function on $Q$. Let $X(Q, \eta)$ be the 
quasitoric orbifold constructed from the characteristic model $(Q, \eta)$.
There is a natural $ \TT^{n+1} $-action on $X(Q, \eta)$, see  Subsection 2.1 of \cite{PS}. Let
\begin{equation}\label{eq2}
\pi_Q \colon X(Q, \eta) \to Q
\end{equation}
be the orbit map of this action and $ \TT_Q $
be the circle subgroup of $\TT^{n+1}$ determined by the submodule $\{0\} \times \ldots \times \{0\} \times \ZZ $ of
$\ZZ^{n+1}$. From the definition of $\eta$ it is clear that $W(Q, \lambda ) $ is the orbit space of the circle
$ \TT_Q $ action on $ X( Q, \eta )$.
\end{proof}
\begin{remark}
The quotient map $ \phi_Q \colon X( Q, \eta ) \to W( Q, \lambda) $ is not a fiber bundle map.
%But the map $\phi_Q$ is a trivial $\TT_Q$ fibration over the interior of $W(Q, \lambda)$.
\end{remark}

%Now we prove our main theorems.
\begin{theorem}\label{comcob}
Let $ \{Q \backslash Q_1, \ldots, Q_k\} $ be a simple $(n+1)$-polytope with exceptional facets in $\RR^{n+1}$ and 
%$Q_1, \ldots,$ $ Q_k$ in $\RR^{n+1}$.
 $\lambda$ be an isotropy function on $\{Q \backslash Q_1, \ldots, Q_k\}$. Then there is a stable
complex structure on $W(Q, \lambda)$. Moreover $[X(Q_1, \xi^1)] + \cdots + [X(Q_k, \xi^k)] = 0$ in $OB_{2n}^U(pt)$ where $\xi^i$
is the restriction of the isotropy function $\lambda$ on the facets of $Q_j$ for $j=1, \ldots, k$.
\end{theorem}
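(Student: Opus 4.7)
The plan is to transport the canonical stable complex structure of the quasitoric orbifold $X(Q,\eta)$ (supplied by Lemma \ref{qto}) down through the quotient map $\phi_Q\colon X(Q,\eta)\to W(Q,\lambda)$, and then to compare its restriction to the boundary with the native stable complex structure of each $X(Q_j,\xi^j)$. By the orbifold Davis--Januszkiewicz decomposition of \cite{PS}, there is an isomorphism of real orbi-bundles
$$TX(Q,\eta)\oplus\underline{\RR^{2(m+k-n-1)}}\;\cong\;L_{F_1}\oplus\cdots\oplus L_{F_m}\oplus L_{Q_1}\oplus\cdots\oplus L_{Q_k},$$
where each $L_\bullet$ is the complex orbi-line bundle associated to a facet via its characteristic vector.

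First I would analyse how $\TT_Q$ acts on these line bundles. Because $\eta(F_i)=(\lambda_i,0)$ has vanishing last coordinate and $\TT_Q$ is generated by $(0,\ldots,0,1)\in\ZZ^{n+1}$, the circle $\TT_Q$ acts trivially on each $L_{F_i}$, so these descend to orbi-line bundles $\bar L_{F_i}$ on $W(Q,\lambda)$. On the other hand $\TT_Q$ acts with weight one on each $L_{Q_j}$, and a direct comparison of characteristic data identifies the $\TT_Q$-fixed locus of $X(Q,\eta)$ with $\bigsqcup_j X(Q,\eta)(Q_j)=\bigsqcup_j X(Q_j,\xi^j)$; an equivariant tubular neighbourhood of this fixed set is the disk bundle of $L_{Q_j}|_{X(Q_j,\xi^j)}$, whose $\TT_Q$-quotient is the collar $X(Q_j,\xi^j)\times[0,\varepsilon)$ of $\partial W(Q,\lambda)$. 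Off this fixed set the action is locally free and the split $TX(Q,\eta)\cong\phi_Q^{*}TW(Q,\lambda)\oplus\underline{\RR}$ (with vertical $\RR$) holds, while across the collars the weight-one action on $L_{Q_j}$ contributes a trivial complex summand after passage to the quotient. Combining these observations yields an isomorphism
$$TW(Q,\lambda)\oplus\underline{\RR^{a}}\;\cong\;\bar L_{F_1}\oplus\cdots\oplus\bar L_{F_m}\oplus\underline{\CC^{b}}$$
for appropriate non-negative integers $a,b$, producing the desired stable complex structure on $W(Q,\lambda)$.

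Finally, to obtain the cobordism relation I would verify that this stable complex structure restricts, on each boundary component $X(Q_j,\xi^j)$, to the canonical structure of that quasitoric orbifold. The key identification is that $\bar L_{F_i}|_{X(Q_j,\xi^j)}$ coincides with the characteristic orbi-line bundle of the facet $F_i\cap Q_j$ of $Q_j$ when this intersection is nonempty, and is canonically trivial otherwise; the outward-normal $\RR$-direction from the collar combines with one of the trivial real summands to yield an extra trivial complex line, and the resulting decomposition is precisely the Davis--Januszkiewicz splitting used to define the canonical stable complex structure on $X(Q_j,\xi^j)$. Once this boundary compatibility is in place, $W(Q,\lambda)$ realises an orbifold complex cobordism from the empty set to $\bigsqcup_j X(Q_j,\xi^j)$, yielding $[X(Q_1,\xi^1)]+\cdots+[X(Q_k,\xi^k)]=0$. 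I expect this last matching to be the main technical obstacle, as in the orbifold setting the identification of line bundles along each boundary stratum must be carried out compatibly with the finite isotropy data at intersections of the facets of $Q_j$, rather than merely on the underlying topological space.
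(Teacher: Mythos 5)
Your overall strategy --- endow $X(Q,\eta)$ with its canonical omnioriented stable complex structure and push it down through the circle quotient $\phi_Q\colon X(Q,\eta)\to W(Q,\lambda)$ --- is the same as the paper's, and your final paragraph (matching the restricted line bundles on each boundary component with the characteristic orbi-line bundles of $X(Q_j,\xi^j)$) is essentially what the paper does with its bundle $E_3$. The genuine gap is in the middle step, the descent across the $\TT_Q$-fixed locus. The splitting $TX(Q,\eta)\cong\phi_Q^{*}TW(Q,\lambda)\oplus\RR$ holds only where the action is locally free; over the fixed locus $\bigsqcup_j X(Q_j,\xi^j)$ the two sides have different ranks ($TX(Q_j,\xi^j)\oplus\nu_j$ versus $TX(Q_j,\xi^j)\oplus\RR$), and the assertion that ``the weight-one action on $L_{Q_j}$ contributes a trivial complex summand after passage to the quotient'' is not a bundle-theoretic statement: an equivariant line bundle on which the circle acts with nonzero weight does not descend along a non-free quotient, and $\phi_Q$ is not even a fiber bundle near the fixed set (the paper remarks on this right after Lemma \ref{qto}). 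As written, your argument produces a stable complex structure only on the complement of a collar of $\partial W(Q,\lambda)$, and the extension over the collar --- which is where the boundary restriction must be computed --- is exactly the missing piece.

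The paper sidesteps this by excision rather than by analyzing the fixed locus: it cuts off a neighborhood of each exceptional facet $Q_j$ by a hyperplane $H_j$, obtaining a polytope $Q_P$ diffeomorphic to $Q$ that avoids the facets $Q_j$. Over $Q_P$ the circle $\TT_Q$ acts freely, the preimage $\bar W=\pi_Q^{-1}(Q_P)$ is the product $W(Q_P,\lambda)\times T_Q$, and restricting the isomorphism $T(X(Q,\eta))\oplus\RR^{2(m-n-1)}\cong\rho_1\oplus\cdots\oplus\rho_m$ to $\bar W$ and then to the slice $W(Q_P,\lambda)$ exhibits $T(W(Q_P,\lambda))$ plus an odd-rank trivial real bundle as a sum of complex orbifold line bundles; since $W(Q_P,\lambda)\cong W(Q,\lambda)$ and $\partial W(Q_P,\lambda)=\bigsqcup_j X(Q_j,\xi^j)$, both the stable complex structure and the boundary compatibility follow at once. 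Your disk-bundle description of the collar is correct for the underlying spaces, so your proof can be repaired by replacing the descent across the fixed locus with this cutting construction, i.e., by only ever taking the quotient of the free part of the action.
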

\begin{proof}
We construct a di-characteristic model $(Q, \eta)$ from the pair $(Q, \lambda)$, see Equation (\ref{charp}).
Let $X(Q, \eta)$ be the quasitoric orbifold constructed from the characteristic model $(Q, \eta)$, see 
Subsection 2.1 of \cite{PS}.
%So by Lemma \ref{qto}, we can construct a quasitoric orbifold $X(Q, \eta)$.
%We fix an orientation of $\TT^{n+1}$ and the polytope $Q$. So choice of signs of di-characteristic
%vectors determines the omniorientation of $X(Q, \eta)$. 
Let $X_1, \ldots, X_m$ be the characteristic suborbifolds of $X(Q, \eta)$ and the 
omniorientation of $X(Q, \eta)$ be the characteristic omniorientation. By Section 6 of \cite{PS},
this omniorientation determines a stably complex structure on $X(Q, \eta)$ by means of the following
isomorphism of orbifold real $2m$-bundles
\begin{equation}\label{scs}
T(X(Q, \eta)) \oplus \RR^{2(m-n-1)} \cong \rho_1 \oplus \cdots \oplus \rho_m
\end{equation}
where the orbifold complex line bundles $\rho_i$'s can be interpreted in the following way.
The orientation of the orbifold normal bundle $\mu_i$ over the characteristic suborbifold $X_i$
defines a rational Thom class in the cohomology group $H^2({\bf T}(\mu_i), \QQ)$, represented by
a complex line bundle over the Thom complex ${\bf T} (\mu_i)$. We pull this back along the
Pontryagin-Thom collapse $X(Q, \eta) \to {\bf T}(\mu_i)$, and denote the resulting orbibundle by $\rho_i$. 

Cut off a neighborhood of each facets $ Q_j$ of $Q$
by an affine hyperplane $ H_j$  in $\RR^{n+1}$ for $j= 1, \ldots, k$ such that $H_i \cap H_j \cap Q$
is empty for $i \neq j$, $1 \leq i, j \leq k$. Then the remaining subset of $Q$, denoted by $Q_P$, is an $(n+1)$-dimensional
simple polytope which is naturally diffeomorphic as manifold with corners to $Q$. Suppose $$ Q_{H_j} = Q \cap H_j =
H_j \cap Q_P$$ for $ j=1, \ldots k $. Then $ Q_{H_j}$ is a facet of $Q_P$
%called the facet corresponding to the facet $Q_i$
for each $j \in \{1, \ldots, k\}$.
Note that $Q_{H_j}$ is diffeomorphic as manifold with corners to $Q_j$ for $j=1, \ldots, k$.
Clearly, $W(Q_P, \lambda)= (\TT^{n} \times Q_P)/\sim_b$ is equivariantly diffeomorphic to $W(Q, \lambda)$.
Let $\bar{W}$ be the pullback of the following diagram $\colon$
\begin{equation}
\begin{CD}
\bar{W} @>>> X(Q, \eta)\\
@V\pi_v VV  @V\pi_Q VV \\
Q_P @>\iota >> Q.
\end{CD}
\end{equation}
 Then
$\bar{W} = W(Q_P, \lambda) \times \TT_Q$ where $\TT_Q$ is the circle subgroup of $\TT^{n+1}$ determined by
the vector $(0, \ldots, 0, 1)$ in $\ZZ^{n+1}$ and $\pi_Q$ is given in Equation (\ref{eq2}). We have the 
following commutative diagrams of complex orbibundles.
\begin{equation}
\begin{CD}
E_3 @>>> E_2 @>>> E_1 @>>> T(X(Q, \eta)) \oplus \RR^{2(m-n-1)}\\
@VVV @VVV @VVV  @VVV \\
X(Q_i, \xi^i) @>>>W(Q_P, \lambda) @>>> \bar{W} @>>> X(Q, \eta)
\end{CD}
\end{equation}
where $E_1, E_2$ and $E_3$ are the pullback bundles. Since $\bar{W} = W(Q_P, \lambda) \times T_Q$, we have the following isomorphism
of bundles $\colon$ $$E_2 \cong T(W(Q_P, \lambda)) \oplus \RR^{2(m-n)-1}.$$ Hence for a choice of sign of isotropy
vectors of $\{Q \backslash Q_1, \ldots, Q_k\}$ the orbifold
with boundary $W(Q, \lambda)$ has a stable complex structure.

Observe that the bundle $E_3$ is isomorphic to
\begin{equation}
T(X(Q_i, \xi^i)) \oplus \RR^{2(m-n)} \cong \rho_{i_1} \oplus \cdots \oplus \rho_{i_l} \oplus \RR^{2 (m-l)}
\end{equation}
where $\rho_{i_j}$ is the complex line bundle corresponding to the $i_j$-th characteristic suborbifold of $X(Q_i, \xi^i)$.
Hence $[X(Q_1, \xi^1)] + \cdots + [X(Q_k, \xi^k)] = 0$ in the orbifold complex cobordism group $OB_{2n}^U(pt)$.
\end{proof}

\begin{theorem}\label{qbd}
Let $X$ be a $2n$-dimensional omnioriented quasitoric orbifold over the simple polytope $P$. Then $[X] = [M_1]
+ \cdots + [M_k]$ in $OB_{2n}^U(pt)$, where $M_1, \ldots, M_k$ are complex orbifold projective spaces.
\end{theorem}
\begin{proof}
Let $\mathcal{F}(P) = \{F_1, \ldots, F_m\}$ be the facets and $\{v_1, \ldots, v_k\}$ be the vertices
of $P \subset \RR^n$.  Let $Q = P \times [0, 1]$. So $Q$ is an $(n+1)$-dimensional simple polytope in
$\RR^{n+1}$. Cut off a neighborhood of each vertex $v_j \times \{0\}$ of $Q$ by an affine hyperplane $H_j$
for $j=1, \ldots, k$ in $\RR^{n+1}$ such that
$$H_i \cap H_j \cap Q ~ \mbox{is empty for} ~ i \neq j, ~ \mbox{and} ~ H_j \cap P ~\mbox{is empty for}
~i, j \in \{1, \ldots, k\}.$$
Then the remaining subset of $Q$, denoted by $Q_P$, is an $(n+1)$-dimensional simple polytope. Observe
that $\bigtriangleup_i^n = Q \cap H_i = H_i \cap Q_P$ is a facet of $Q_P$. Also $\bigtriangleup_i^n$ is
an $n$-dimensional simplex
%$\bigtriangleup^n_i$
in $\RR^{n+1}$ for $i=1, \ldots, k$. So $\{Q_P \backslash P \times \{1\}, \bigtriangleup^n_1, \ldots, \bigtriangleup^n_k \}$
is an $(n+1)$-dimensional simple polytope with exceptional facets.
%$\{P \times 1, \bigtriangleup^n_1, \ldots, \bigtriangleup^n_k \}$.
Let
$$\bar{F}_0 = Q_P \cap (P \times \{0\}), ~\mbox{and} ~ \bar{F}_i = Q_P \cap (F_i \times [0, 1]) ~\mbox{for} ~ i=1, \ldots, m.$$
So $\mathcal{F}(Q_P)=\{\bar{F}_i \colon i=0, \ldots, m\} \cup \{P \times \{1\}, \bigtriangleup^n_1, \ldots, \bigtriangleup^n_k\}$.

Let $\xi \colon \mathcal{F}(P) \to \ZZ^n$ be the di-characteristic function associated to the omniorientation of $X$.
Let $E(P)$ be the set of all edges of $P$ and $e \in E(P)$. Then $e = F_{i_1} \cap \cdots \cap F_{i_{n-1}}$ for
a unique collection of facets $F_{i_1}, \ldots, F_{i_{n-1}}$ of $P$.
Let $\ZZ(e)$ be the submodule of $\ZZ^n$ generated by $\{\xi(F_{i_1}), \ldots, \xi(F_{i_{n-1}})\}$. So $\ZZ(e)$ is a
free $\ZZ$-module of rank $n-1$ for any $e \in E(P)$. From Remark \ref{re1}, there exists $\lambda_0 \in
\ZZ^n - \cup_{e \in E(P)} \ZZ(e)$. So $\lambda_0$ is nonzero and
%We may consider that $\lambda_0$ is primitive, since any nonzero vector is an integer multiple of a primitive vector in $\ZZ^n$
%and a primitive vector belongs to $\ZZ^n - \cup_{e \in E(P)} \ZZ(e)$ if and only if .
 $\{\lambda_0, \xi(F_{i_1}), \ldots, \xi(F_{i_{n-1}})\}$ is a linearly independent set in $\ZZ^n$ for the edge
$e= F_{i_1} \cap \cdots \cap F_{i_{n-1}}$. 
Define $\lambda \colon \{\bar{F}_i \colon i=0, \ldots, m\} \to \ZZ^n$ by
\begin{equation}
\lambda(F) = \left\{ \begin{array}{ll} \xi(F_i) & \mbox{if}~  F= \bar{F}_i \\
\lambda_0 & \mbox{if}~ F = \bar{F}_0.
 \end{array} \right.
\end{equation}
So the function $\lambda$ is an isotropy function on $\{Q_P \backslash P \times \{1\}, 
\bigtriangleup^n_1, \ldots, \bigtriangleup^n_k\}$. 
Let $$\xi^j \colon \mathcal{F}(\bigtriangleup^n_j) \to \ZZ^n$$ be the restriction of $\lambda$ on the facets
of $\bigtriangleup^n_j$ for $j=1, \ldots, k$. Then $(\bigtriangleup_j^n, \xi^j)$ is a characteristic model
for $j=1, \ldots, k$. Let $M_j$ be the complex orbifold projective space constructed from the characteristic model
$(\bigtriangleup^n_j, \xi^j)$ for $j=1, \ldots, k$. So by Lemma \ref{orbbd}, the boundary of
$(2n+1)$-dimensional oriented orbifold $W(Q_P, \lambda)$ is $X \sqcup M_1 \sqcup \cdots \sqcup M_k$.
 Hence by Theorem \ref{comcob} we get the orbifold complex cobordism relation $[X]= [M_1] + \cdots + [M_k]$
 in $OB_{2n}^U(pt)$.
\end{proof}

\begin{example} Let $X$ be a 2-dimensional quasitoric orbifold over $P$. Then $P$ is a closed interval say $[0, 1]$
and the corresponding di-characteristic function on $P$ is given by $\xi(\{0\})= p_1$, $\xi(\{1\})=p_2$ (possibly up to
sign) for some nonzero integers $p_1$ and $p_2$. So the Figure \ref{egtd} gives an isotropy function $\lambda$ on
$\{Q_P \backslash P \times 1, \bigtriangleup_1^1, \bigtriangleup_2^1\}$. Then $W(Q_P, \lambda)$ is a 3-dimensional
stably complex orbifold with boundary $X \sqcup M_1 \sqcup M_2$ where $M_i$ is the quasitoric orbifold corresponding to the facet
$\bigtriangleup_i^1$ for $i=1,2$. By Example \ref{td} and Proposition \ref{probi}, $M_i$ is equivariantly homeomorphic
to the teardrop $\mathbb{WP}(1, p_i)$ for $i=1, 2$. Therefore $X$ is complex orbifold cobordant to two copies of teardrop.
\begin{figure}[ht]
        \centerline{
           \scalebox{0.70}{
            \input{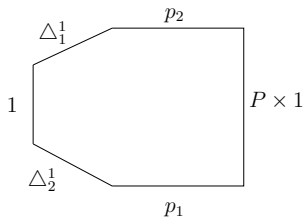}
            }
          }
       \caption {An isotropy function on a $2$-polytope with exceptional facets.}
        \label{egtd}
      \end{figure}
      \qed
\end{example}
%\end{example}

\begin{obser}\label{obs}
Let $$A = \{[M] \colon M ~ \mbox{is a} ~2n\mbox{-dimensional complex orbifold projective space}\}.$$ We show that $A$ is not a
linearly independent set in $OB_{2n}^U(pt)$. Let $Q$ be an $(n+1)$-dimensional simple
polytope in $\RR^{n+1}$ with vertices $\{v_1, \ldots, v_k\}$ and facets $\{F_1, \ldots, F_m\}$. We delete
a neighborhood of each vertex $v_i$ by cutting with an hyperplane $H_i$ in $\RR^{n+1}$ for $i= 1, \ldots, k$ 
such that $H_i \cap H_j \cap Q$ is empty for $i \neq j$, $1 \leq i, j \leq k$. Let $Q_V$ be the remaining subset of $Q$
and $$\bigtriangleup_i^n = Q \cap H_i = Q_V \cap H_i \quad \mbox{for} \quad i=1, \ldots, k.$$
%denoted by $Q_V$, of $Q$ is a simple $(n+1)$-dimensional
%polytope with exceptional facets $\{Q \cap H_i = Q_V \cap H_i \colon i=1, \ldots, k\}$.
Since $Q$ is an $(n+1)$-dimensional simple polytope, $\bigtriangleup_i^n$ is an $n$-simplex
for $i=1, \ldots, k$.  Let $\bar{F}_i = F_i \cap Q_V$ for $i=1, \ldots, m$. 
So $\{Q_V \backslash \bigtriangleup_1^n, \ldots, \bigtriangleup_k^n\}$ is an 
$(n+1)$-dimensional simple polytope with exceptional facets and $$\mathcal{F}(Q_V)=\{\bar{F}_1, \ldots, \bar{F}_m\}
\cup \{\bigtriangleup_1^n, \ldots, \bigtriangleup_k^n\}.$$
Since $\ZZ^n$ is not a union of finitely many proper submodules over $\ZZ$, we can define an
isotropy function $$\lambda \colon \{\bar{F}_1, \ldots, \bar{F}_m\} \to \ZZ^n$$ on $\{Q_V \backslash 
\bigtriangleup_1^n, \ldots, \bigtriangleup_k^n\}$.
Since $Q$ is an $(n+1)$-dimension simple polytope, each vertex $v_i$ of $Q$ is the intersection of unique collection
of facets $\{F_{i_1}, \ldots, F_{i_{n+1}}\}$ for $i=1, \ldots, k$. So 
$\mathcal{F}(\bigtriangleup_i^n) = \{F_{i_j} \cap \bigtriangleup_i^n \colon j = 1, \ldots, n+1\}$. We define a map
$\xi^i \colon \mathcal{F}(\bigtriangleup_i^n) \to \ZZ^n$ by
\begin{equation}
\xi^i(F_{i_j} \cap \bigtriangleup_i^n) = \lambda(\bar{F}_{i_j}) \quad \mbox{for} \quad j=1, \ldots, n+1.
\end{equation}
Then $\xi^i$ is a di-characteristic function on $\bigtriangleup_i^n$. Let $X(\bigtriangleup_i^n, \xi^i)$ be the 
complex orbifold projective space for the characteristic model $(\bigtriangleup_i^n, \xi^i)$ for $i=1, \ldots, k$. 
So by Lemma \ref{orbbd} the space $W(Q_V, \lambda)$ is an oriented orbifold with boundary where the boundary
$\partial{W(Q_V, \lambda)}$ is a disjoint union of $\{X(\bigtriangleup_i^n, \xi^i) \colon i=1, \ldots, k\}$.
Thus by Theorem \ref{comcob}, we have
$[X(\bigtriangleup_i^n, \xi^1)] + \cdots + [X(\bigtriangleup_i^n, \xi^k)] =0$ in $OB_{2n}^U(pt)$.
Hence the set of vectors in $A$ is not linearly independent.
This also follows from Theorem \ref{qbd} if $X$ is a complex orbifold projective space.
In that case $k$ is $n+1$. But if $Q$ is any $(n+1)$-dimensional simple polytope, $k$ can be as large as possible.
So we get more relations among complex orbifold projective spaces.
Now we may ask the following question.
\end{obser}
\begin{ques}
Are there any other types of relation among complex orbifold projective spaces, and if so how do they arise?
\end{ques}

Now we give some computations in the complex cobordism ring $\Omega^U$. Milnor and Novikov independently showed that the ring
$\Omega^U$ is isomorphic to the polynomial ring $\ZZ[a_1, a_2, \ldots]$ where $\deg a_i=2i$, see \cite{Nov}. 
The complex projective spaces $\CP^n $ for $ n \geq 0$ and Milnor hypersurfaces can be chosen as the standard set of multiplicative
generators for $\Omega^U$, see example 5.39 in \cite{BP}. 
In \cite{BR1}, Buchstaber and Ray introduced a new set of multiplicative generators for $\Omega^U$ which
are quasitoric manifolds. In \cite{BR2}, they proved that any complex cobordism class contains a quasitoric
manifold in dimension $>$ 2 by showing that a disjoint union of products of this new generators is complex cobordant
to a quasitoric manifold. See example 5.28 in \cite{BP} for the case of dimension 2.
%Non-singular projective toric varieties are almost complex quasitoric manifolds.
%In the following we give a combinatorial description
%when a complex cobordism class may contains an almost complex quasitoric manifold.

\begin{example}\label{cncpn}
 Let $\bigtriangleup^n$ be the $n$-simplex with $\mathcal{F}(\bigtriangleup^n)=\{F_0, \ldots, F_n\}$
 and $$\xi \colon \mathcal{F}(\bigtriangleup^n) \to \ZZ^n$$ be the characteristic function such that
 the set $\{\xi_0, \ldots, \widehat{\xi_i}, \ldots, \xi_n \}$ is a basis of $\ZZ^n$ where
 $\widehat{}$ represents the omission of the corresponding entry. Let $\xi_i = \xi(F_i)$  for $i=0, \ldots, n$.
 Suppose $\xi_i=(a_{i1}, \ldots,  a_{in})$ for $ i=0, \ldots, n$. Let $X(\bigtriangleup^n, \xi)$ be the quasitoric manifold
 constructed from the characteristic model $(\bigtriangleup^n, \xi)$. Then $X(\bigtriangleup^n, \xi)$ is
 $\delta$-equivariantly homeomorphic to $\CP^n$ for some automorphism $\delta$ of $\TT^n$, see Proposition 5.63 in \cite{BP}.
 The map $\delta$  induces an automorphism $\delta_{\ast}$ of $\ZZ^n$. We may assume that $\delta_{\ast}((\xi_i)^t) = e_i^t$
 where $e_i$ is the $i$-th standard vector of $\ZZ^n$ and $e^t_i$ is the transpose of $e_i$ for $i=1, \ldots, n$.
 By condition on the characteristic function $\xi$
 one can show that the set $\{\delta_{\ast}((\xi_0)^t), \ldots, \widehat{\delta_{\ast}((\xi_i})^t), \ldots,
 \delta_{\ast}((\xi_n)^t) \}$ is a basis of $\ZZ^n$ for $i=0, \ldots, n$. Hence $\delta_{\ast}((\xi_0)^t)=(a_1, \ldots, a_n)^t$
 where $a_i= \pm 1$.
 
 By Theorem 5.18 in \cite{BP} the cohomology ring of
 $X(\bigtriangleup^n, \xi)$ with integer coefficients is $\ZZ[x_0, \ldots, x_n]/{I+J}$ where $I=\< {x_0 ... x_n}\>$
 and $$J=\<\{a_{1i}x_1+ \cdots + a_{ni}x_n + a_{0i}x_0 : i=1, \ldots, n \}\>$$ and $x_i$ is the Poincare 
 dual of the characteristic submanifold corresponding to the facet $F_i$ for $i=1, \ldots, n$.
 So  in the cohomology ring of $X(\bigtriangleup^n, \xi)$, we get a system of homogeneous equation
 $A {\bf x}= {\bf b}x_0$ where $i$-th column of $A$ is $(\xi_i)^t$, ${\bf x}=(x_1, \ldots, x_n)^t$ and
 ${\bf b}=-(\xi_0)^t$. Then $${\bf x}= \delta^{\ast} A {\bf x} = \delta^{\ast} {\bf b} x_0 = - (a_1, \ldots, a_n)^t x_0.$$
  
 Suppose $a_{j}=1$ for $j \in \{i_1, \ldots, i_l\} \subseteq \{1, \ldots, n\}$ and $a_j=-1$ for $j \notin \{i_1, \ldots, i_l\}$.
  So in the cohomology ring of $X(\bigtriangleup^n, \xi)$,
  $x_0^{n+1}=0$, $x_{i_j}=-x_0$ for $i_j \in \{i_1, \ldots, i_l\}$ and $x_j=x_0$ for $j \notin \{i_1, \ldots, i_l\}$.
  Assume the omniorientation of $X(\bigtriangleup^n, \xi)$ 
 is the characteristic omniorientation. Then by Theorem 5.34 in \cite{BP}, the total Chern class of
 the corresponding stable complex bundle on $X(\bigtriangleup^n, \xi)$ is given by
 $$\mathcal{C}_{\xi} = (1+x_0) \cdots (1+x_n) = (1-x_0)^l (1+x_0)^{n+1-l}.$$
 Using the binomial theorem and Cauchy product formula one can compute the coefficient
 of $x_0^i$ in this expression. Let $c_{\xi,i}$ be the coefficient of $x_0^i$ and $K=(k_1, \ldots, k_s)$ be 
 a partition of $n$. Then $K$-th Chern number of $X(\bigtriangleup^n, \xi)$ is
 $ C_{\xi, K} = c_{\xi, k_1} \cdots c_{\xi, k_s}$.  \qed
\end{example}

Now we discuss some computations in the complex cobordism ring $\Omega^U$.
Let $M$ be a $2n$-dimensional omnioriented quasitoric manifold over a simple polytope $P$. Let
$\xi \colon \mathcal{F}(P) \to \ZZ^n$ be the corresponding characteristic function on $P$.
We introduce some combinatorial data in the following.
Let $\{Q \backslash P, \bigtriangleup^n_1, \ldots, \bigtriangleup^n_k\}$ be
an $(n+1)$-dimensional simple polytope with exceptional facets,
%$\{P, \bigtriangleup^n_1, \ldots, \bigtriangleup^n_k\}$
where $\bigtriangleup_1^n, \ldots, \bigtriangleup_k^n$ are $n$-dimensional simplices.
Let $$\mathcal{F}(Q) = \{F_1, \ldots, F_m\} \cup \{P, \bigtriangleup^n_1, \ldots, \bigtriangleup^n_k\}$$
and $$\lambda \colon \{F_1, \ldots, F_m\} \to \ZZ^n$$ be an isotropy function on
$\{Q \backslash P, \bigtriangleup^n_1, \ldots, \bigtriangleup^n_k\}$
%except the facets $P, \bigtriangleup_1^n, \ldots, \bigtriangleup_k^n$
such that the following holds:
\begin{enumerate}
 \item $\lambda(F_i)=\xi(F_i \cap P)$ if $F_i \cap P$ is nonempty.
\item If $e$ is an edge of $Q$ not contained in $\cup_{i=1}^k \bigtriangleup^n_i \cup P$ then
$\{\lambda(F_{i_1}), \ldots, \lambda(F_{i_n})\}$ is a basis of $\ZZ^n$ where $e = F_{i_1} \cap \cdots \cap F_{i_n}$.
\end{enumerate}
Note that $F_i \cap P$ is nonempty if and only if $F_i \cap P$ is a facet of $P$. So the restriction of
$\lambda$ on the facets of $P$ is the map $\xi$. Thus we may assume $\lambda$ is an extension of $\xi$.

Let $\xi^i \colon \mathcal{F}(\bigtriangleup^n_i) \to \ZZ^n$ be the map defined by 
$$\xi^i(F) = \lambda(F_j) ~ \mbox{if} ~ F= F_j \cap \bigtriangleup^n_i.$$
So $\xi^i$ is a characteristic function on $\bigtriangleup_i^n$ for $i=1, \ldots, k$. Let $X(\bigtriangleup^n_i, \xi^i)$
be the quasitoric manifold constructed from the characteristic model $(\bigtriangleup_i^n, \xi^i)$ for $i=1, \ldots, k$.
Recall that $X(\bigtriangleup_i^n, \xi^i)$ is $\delta$-equivariantly diffeomorphic to $\CP^n$ for some
$\delta \in \Aut(\TT^n)$. Then by Corollary \ref{cobm}, the space $W(Q, \lambda)$ is a manifold with 
quasitoric boundary where the boundary is $M \sqcup X(\bigtriangleup_1^n, \xi^1) \sqcup \ldots \sqcup X(\bigtriangleup_k^n, \xi^k)$.
Therefore by Theorem \ref{qbd} we get $$[M] = k[\CP^n]$$ in $\Omega^U$, where the stable complex structure on each $\CP^n$ is
determined by the corresponding characteristic function.

%The $\delta$-equivariant diffeomorphism from $X(\bigtriangleup_i^n, \xi^i)$ to $\CP^n$ induces 
%a complex bundle morphism from $\oplus_{j=1}^{n+1} \rho_{i_j}$ to $\oplus_{1}^{n+1} \bar{\gamma}$
%where $\rho_{i_j}$'s come from Equation (\ref{scs}) and $\bar{\gamma}$ is the dual of the canonical
%line bundle $\gamma$ over $\CP^n$. The Chern
%classes and Chern numbers of $\CP^n$ with standard complex structure are given in Section 14 and 16
%respectively in Milnor and Stasheff \cite{MS}. By naturality
%of Chern classes, we can compute the Chern classes and hence Chern number of $X(\bigtriangleup_i^n, \xi^i)$.
Let $K$ be a partition of $n$ and $C_{i, K}$ be the $K$-th Chern number of $X(\bigtriangleup_i^n, \xi^i)$
for $i=1, \ldots, k$. Since two stably complex manifolds are cobordant if and only if their Chern
numbers are identical (by Milnor \cite{Mil} and Novikov \cite{Nov2}), we get the following 
formula for the $K$-th Chern number $C_K$ of $M$, $$C_K (M) = C_{1,K} + \cdots + C_{k, K}.$$ 
Recall that every quasitoric manifold has a stable complex structure which depends on the omniorientation
on it. Not all quasitoric manifold admit an almost complex structure. For example,
$\CP^2 \# \CP^2$ is a quasitoric manifold, but not an almost complex manifold.

%From the above discussion we get the following theorem.
\begin{theorem}\label{hp}
In the above discussion, if $M$ is an almost complex quasitoric manifold then the complex cobordism class of
$\sqcup_{i=1}^k X(\bigtriangleup^n_i, \xi^i)$ contains the almost complex quasitoric manifold.
\end{theorem}
%\begin{proof}
% This follows from Theorem \ref{comcob}.
%\end{proof}
\begin{remark}
In the Theorem \ref{hp}, if $M$ is a smooth projective space then we get a representation of $[M]$
in term of some generators of complex cobordism ring $\Omega^U$. Therefore the above combinatorial
process  gives some sufficient conditions for the Hirzebruch problem which is mentioned in the introduction.
\end{remark}

\begin{example}
The quasitoric manifold corresponding to the characteristic function on $P$ in Figure \ref{egc3} is the Hirzebruch
surface $M^4_2$, see Example 1.19 in \cite{DJ}. The function on $\{Q \backslash Q_1, \ldots, Q_5\}$ is an isotropy
function which extends the characteristic function on $P \cong Q_5$. Observe that the restriction of the isotropy
function on the facets of $\bigtriangleup^2_i$ is a characteristic function $\xi^i \colon \mathcal{F}(\bigtriangleup_i^2)
\to \ZZ^2$ for $i=1, \ldots, 4$. For each $i \in \{1, \ldots, 4\}$, the corresponding quasitoric manifold
$X(\bigtriangleup_i^2, \xi^i)$ is $\delta$-equivariantly homeomorphic to $\CP^2$. Then $[M_2^2] = 4 [\CP^2]$ where 
the stable complex structure on $\CP^2$ is determined by the corresponding characteristic function. Hence
the complex cobordism class of $4 [\CP^2]$ contains a connected algebraic variety.
\begin{figure}[ht]
        \centerline{
           \scalebox{0.70}{
            \input{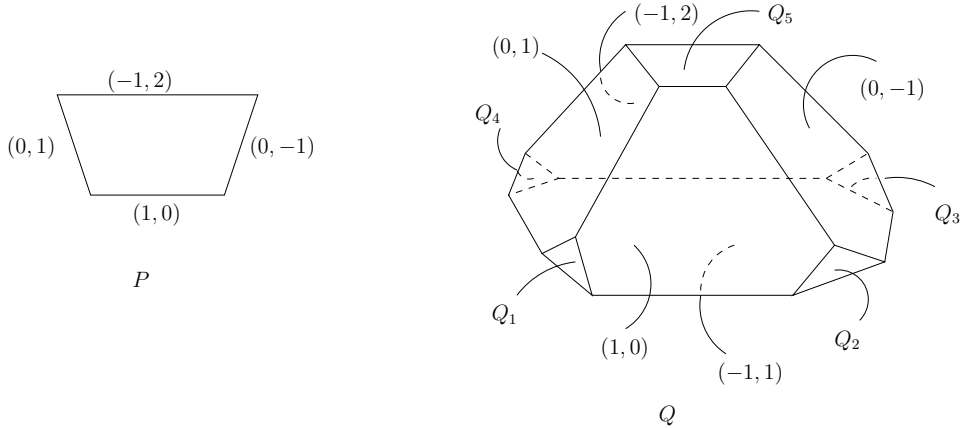}
            }
          }
       \caption {A characteristic and isotropy function on a $2$- and $3$-polytope.}
        \label{egc3}
      \end{figure}
      \qed
\end{example}

{\bf Acknowledgement.} The author would like to thank Professor Mainak Poddar, Marzieh Bayeh and
Professor Goutam Mukherjee for helpful suggestions and stimulating discussions.
He also thanks anonymous referee for many helpful suggestions.
He thanks University of Regina and Pacific Institute for the Mathematical Sciences
for support. 

\bibliographystyle{alpha}
\bibliography{bibliography.bib}
%\end{document}

\vspace{1cm}

\vfill

\end{document}